\newtheorem{theorem}{Theorem}[section]
\newtheorem{lemma}[theorem]{Lemma}
\newenvironment{definition}[1][Definition]{\begin{trivlist}
\item[\hskip \labelsep {\bfseries #1}]}{\end{trivlist}}
\newenvironment{conjecture}[1][Conjecture]{\begin{trivlist}
\item[\hskip \labelsep {\bfseries #1}]}{\end{trivlist}}
\newenvironment{remark}[1][Remark]{\begin{trivlist}
\item[\hskip \labelsep {\bfseries #1}]}{\end{trivlist}}
\newenvironment{notation}[1][Notation]{\begin{trivlist}
\item[\hskip \labelsep {\bfseries #1}]}{\end{trivlist}}
\begin{document}

\title{Incomparable copies of a poset in the Boolean lattice}
\author{Gyula O.H. Katona\footnote{Alfr\'ed R\'enyi Institute of Mathematics}~ and
D\'aniel T. Nagy\footnote{E\"{o}tv\"{o}s Lor\'and University, Budapest}}
\maketitle

\begin{abstract}
Let $B_n$ be the poset generated by the subsets of $[n]$ with the inclusion as relation and let $P$ be a finite poset. We want to embed $P$ into $B_n$ as many times as possible such that the subsets in different copies are incomparable. The maximum number of such embeddings is asymptotically determined for all finite posets $P$ as $\frac{{n \choose \lfloor n/2\rfloor}}{M(P)}$, where $M(P)$ denotes the minimal size of the convex hull of a copy of $P$. We discuss both weak and strong (induced) embeddings.
\end{abstract}

\section{Introduction}~

\begin{definition}
Let $B_n$ be the Boolean lattice, the poset generated by the subsets of $[n]$ with the inclusion as relation and $P$ be a finite poset with the relation $<_p$. (If $S$ is a set of size $n$ we may also write $B_S$.) $f:~P\rightarrow B_n$ is an {\it embedding} of $P$ into $B_n$ if it is an injective function that satisfies $f(a)\subset f(b)$ for all $a<_p b$. $f$ is called an {\it induced embedding} if it is an injective function such that $f(a)\subset f(b)$ if and only if $a<_p b$.
\end{definition}

\begin{definition}
Let $X$ and $Y$ be two sets of subsets of $[n]$. $X$ and $Y$ are {\it incomparable} if there are no sets $x\in X$ and $y\in Y$ such that $x\subseteq y$ or $y\subseteq x$. A family of sets of subsets of $[n]$ is {\it incomparable} if its elements are pairwise incomparable.
\end{definition}

We investigate the following problem. How many times can we embed a poset into $B_n$ such that the resulting copies form an incomparable family? An asymptotic answer is given in both the induced and the non-induced case. Before we can state our main result, some notations are needed.

\begin{notation}
Let $F\subseteq B_n$. The {\it convex hull} of $F$ is the set
\begin{equation}
conv(F)=\{b\in B_n ~ \big| ~ \exists a,c\in F ~~ a\subseteq b\subseteq c\}.
\end{equation}
We use the following notations for the minimal size of the convex hull. For a finite poset $P$
\begin{eqnarray}
t_1(P) &=& \min_{f,n}\{|conv(Im(f))| ~ \big| ~ f:~P\rightarrow B_n ~ {\rm is~an~embedding} \} \\
t_2(P) &=& \min_{f,n}\{|conv(Im(f))| ~ \big| ~ f:~P\rightarrow B_n ~ {\rm is~an~induced~embedding} \}
\end{eqnarray}
\end{notation}

\begin{theorem}\label{mainthm}
Let $P$ be a finite poset. Let $M_1(P,n)$ (and $M_2(P,n)$) denote the largest $M$ such that there are embeddings (induced embeddings) $f_1, f_2, \dots f_M: P\rightarrow B_n$ such that $\{Im(f_i),~ i=1,2,\dots M \}$ is an incomparable family. Then
\begin{eqnarray}
\lim_{n\rightarrow \infty} \frac{M_1(P,n)}{{n \choose \lfloor n/2\rfloor}} &=& \frac{1}{t_1(P)} \\
\lim_{n\rightarrow \infty} \frac{M_2(P,n)}{{n \choose \lfloor n/2\rfloor}} &=& \frac{1}{t_2(P)}.
\end{eqnarray}
\end{theorem}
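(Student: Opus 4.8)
The plan is to prove matching upper and lower bounds. Since the two statements are formally identical, I will write $t(P)$ for $t_1(P)$ or $t_2(P)$ and \emph{embedding} for the corresponding (weak or induced) notion, and show $\lim_n M(P,n)/\binom{n}{\lfloor n/2\rfloor}=1/t(P)$, where $M(P,n)$ is $M_1$ or $M_2$ accordingly. The engine of the whole argument is one structural observation: if $f_i,f_j$ are incomparable copies, then no maximal chain of $B_n$ can meet both $conv(Im(f_i))$ and $conv(Im(f_j))$. Indeed, if a maximal chain contained $z\in conv(Im(f_i))$ and $z'\in conv(Im(f_j))$ with, say, $z\subseteq z'$, then a lower witness $a\in Im(f_i)$ with $a\subseteq z$ and an upper witness $c\in Im(f_j)$ with $z'\subseteq c$ give $a\subseteq c$, a comparability between the two copies. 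In particular the convex hulls are pairwise disjoint, and the $n!$ maximal chains split (apart from those meeting no hull) according to which single hull they meet.

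For the upper bound I pass to the normalized measure $\mu_i=\Pr[C\cap conv(Im(f_i))\neq\emptyset]$ for a uniformly random maximal chain $C$. By the observation these events are disjoint, so $\sum_i\mu_i\le 1$. The heart of this direction is a uniform concentration estimate: for \emph{every} embedding $f$, writing $H=conv(Im(f))$,
\[ \mu(f)\ \ge\ (1-o(1))\,\frac{t(P)}{\binom{n}{\lfloor n/2\rfloor}}, \]
with the $o(1)$ independent of $f$. Granting this, $M\cdot(1-o(1))\,t(P)/\binom{n}{\lfloor n/2\rfloor}\le\sum_i\mu_i\le 1$, so $\limsup_n M/\binom{n}{\lfloor n/2\rfloor}\le 1/t(P)$. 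To prove the estimate I separate cases by the size and spread of $H$. When $|H|$ is small, Bonferroni gives $\mu(f)\ge\sum_{z\in H}\binom{n}{|z|}^{-1}-\sum_{z\subsetneq z'\in H}\Pr[C\ni z,z']$; each correction term is smaller by a factor $O(1/n)$ (moving between levels along a chain costs a factor $\Theta(1/n)$), so the correction is lower order and $\mu(f)\ge(1-o(1))|H|/\binom{n}{\lfloor n/2\rfloor}\ge(1-o(1))t(P)/\binom{n}{\lfloor n/2\rfloor}$, using $|H|\ge t(P)$ and $\binom{n}{|z|}\le\binom{n}{\lfloor n/2\rfloor}$; crucially this holds wherever $H$ is centered, and off-center hulls only increase $\mu$. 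When $|H|$ is large (either because some level of $H$ is wide, giving many pairwise disjoint passage events, or because $H$ spans many levels and convexity forces a Boolean interval of large dimension, hence exponentially many sets), $\mu(f)$ already exceeds $t(P)/\binom{n}{\lfloor n/2\rfloor}$ outright. The main obstacle on this side is making the dichotomy fully quantitative and uniform: simultaneously controlling the overlap terms, the off-center contributions, and all possible shapes of $H$.

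For the lower bound, fix an extremal gadget: an (induced) embedding $g:P\to B_m$ with $|conv(Im(g))|=t(P)$, whose hull $H_0$ is compact. The goal is to place $(1-o(1))\binom{n}{\lfloor n/2\rfloor}/t(P)$ translated and relabelled copies of $g$ with pairwise disjoint hulls, concentrated in the central levels, and pairwise incomparable images. Here matching the constant is delicate, and this is where I expect the real difficulty to lie. The naive product construction, splitting $B_n\cong B_{[m]}\times B_{[n]\setminus[m]}$ and placing one copy above each element of a maximum antichain of the second factor, produces only $\binom{n-m}{\lfloor (n-m)/2\rfloor}\sim\binom{n}{\lfloor n/2\rfloor}/2^m$ copies, short by the factor $t(P)/2^m$; and enriching each anchor is capped by the number of incomparable copies that fit inside $B_m$, which only reintroduces the same deficit. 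Consequently the extremal family must use the full $n$-dimensional structure rather than a product, essentially tiling one level's worth of sets by compact translates of $H_0$ spread over $O(1)$ central levels while keeping all copies pairwise incomparable. I would attempt this either through a symmetric chain decomposition of the central band, placing one gadget per bundle of nearby chains, or through a probabilistic placement followed by deletion of the (provably rare) comparable pairs; the crux — attaining density exactly $1/t(P)$ while guaranteeing pairwise incomparability — is the principal obstacle of the proof. Combining the resulting $\liminf_n M/\binom{n}{\lfloor n/2\rfloor}\ge 1/t(P)$ with the upper bound yields the stated limit, and the induced case follows verbatim upon taking $g$ to be an induced embedding realizing $t_2(P)$.
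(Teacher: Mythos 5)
Your upper bound is sound and is essentially the paper's argument: incomparability of the copies forces the convex hulls to be pairwise incomparable, hence no maximal chain meets two of them, and a second--order (Bonferroni) estimate shows each hull meets at least $(1-O(1/n))\,t(P)\,\lfloor n/2\rfloor!\,\lceil n/2\rceil!$ of the $n!$ chains. The size dichotomy you flag as an obstacle is unnecessary: the chains meeting any subfamily of $H$ form a subset of those meeting $H$, so you may apply your small-case estimate to an arbitrary $t(P)$-element subfamily, and the correction term $\binom{t(P)}{2}/n$ is then $O(1/n)$ uniformly because $t(P)$ is a constant depending only on $P$. This is exactly Lemma \ref{chcount} of the paper.

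The lower bound, however, is a genuine gap, and you say so yourself: you correctly compute that the naive product construction falls short by the factor $t(P)/2^m$ and that enriching each anchor inside a single $B_m$-factor cannot recover it, but the two remedies you sketch (symmetric chain decompositions; probabilistic placement plus deletion) are not carried out, and nothing in the proposal shows either attains the constant $1/t(P)$. The paper's resolution is different from both. First (Lemma \ref{labeling}) one linearly orders $B_m$ consistently with inclusion so that the optimal hull occupies an interval $I$ of length $t(P)$. Then (Lemma \ref{ordcopies}) one works in $B_N$ with $N=km$, splits $[N]$ into $k$ blocks, and takes all ``good'' functions: the gadget placed in some block $i$, arbitrary sets with labels outside $I$ in the blocks before $i$, and completely arbitrary sets in the blocks after $i$. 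Their number is the geometric sum $\sum_{i=1}^{k}(2^m-t(P))^{i-1}(2^m)^{k-i}=\frac{2^N}{t(P)}\bigl(1-(1-t(P)/2^m)^k\bigr)$, which recovers the lost factor up to $\varepsilon'$; moreover the lexicographic order of the label codes gives these copies a one-sided property: if $i_1<i_2$ then no image set of $f_{i_2}$ is contained in an image set of $f_{i_1}$. Full incomparability is then restored by tensoring with the complementary cube on $[n]\setminus[N]$: copy number $s$ is attached to the subsets of size $\lfloor(n-N)/2\rfloor-s$, so any containment between two copies would force a containment in the $S$-part in the forbidden direction. Without an ingredient of this kind (or a worked-out substitute) your proposal does not establish $\liminf_n M(P,n)/\binom{n}{\lfloor n/2\rfloor}\ge 1/t(P)$, so the theorem is not proved.
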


We prove upper and lower bounds for $M_j(P,n)$ in the next two sections (Theorem \ref{upperthm} and Theorem \ref{lowerthm}). The two bounds will imply the theorem immediately. Since the proofs are almost identical for $j=1,2$, they will be done simultaneously.

\begin{remark}
Theorem \ref{mainthm}. was independently proved by A. P. Dove and J. R. Griggs, \cite{dg}.
\end{remark}

The problem discussed in this paper is related to the problem of determining the largest families in $B_n$ avoiding certain configurations of inclusion.

\begin{definition}
Let $P_1, P_2, \dots P_k$ be finite posets. La($n, \{P_1, \dots P_k\}$) denotes the size of the largest subset $\mathcal{F}\subset B_n$ such that none of the posets $P_i$ can be embedded into $\mathcal{F}$.
\end{definition}

Let $V_k$ denote the $(k+1)$-element poset that has a minimal element contained in the other $k$ unrelated elements. $\Lambda_k$ is obtained from $V_k$ by reversing the relations. Katona and Tarj\'an proved that a subset of $B_n$ containing none of the posets $\{V_2,~ \Lambda_2\}$ has at most ${n-1\choose \left\lfloor {n-1\over 2} \right\rfloor }$ elements, and this bound is sharp \cite{tarjan}. Such a family consists of pairwise incomparable copies of the one-element poset and the two-element chain.

Another example of the relation of the two problems is determining La($n, V_2$). (See \cite{rfork} for asymptotic bounds on La($n, V_r$).) A $V_2$-free family consists of pairwise independent copies of the posets $\{\Lambda_0, \Lambda_1, \Lambda_2, \dots \}$.

The value of La($n,P$) is not known for a general poset $P$, but many special cases have been solved. See \cite{bukh} for posets whose Hasse diagram is a tree. See \cite{diamond} for diamond and harp posets. \cite{chen} provides upper bounds on La($n,P$) for all posets $P$.

\section{The upper bound}~

To prove the upper bound for $M_j(P,n)$ we need a lemma about chains. Let $S$ be a set of size $n$. A chain in $S$ is a set of subsets $\emptyset=C_0\subset C_1\subset C_2\subset \dots \subset C_n=S$, where $|C_m|=m$ for all $m$.

\begin{lemma}\label{chcount}
Let $\mathcal{F}$ be a family of subsets of $S$, where $|S|=n$ and $|\mathcal{F}|=t$. Then the number of chains intersecting at least one member of $\mathcal{F}$ is at least
\begin{equation} \left(t-\frac{{t \choose 2}}{n}\right)\lfloor n/2\rfloor!\lceil n/2 \rceil!. \end{equation}
\end{lemma}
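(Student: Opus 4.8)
The plan is to count chains by how many members of $\mathcal{F}$ they intersect and then apply an inclusion–exclusion / union-bound argument. Let me set up the basic counting. The total number of chains in $S$ is $n!$, since a chain corresponds to a permutation of $[n]$ (the order in which elements are added). For a fixed subset $A\subseteq S$ with $|A|=a$, the number of chains passing through $A$ equals $a!(n-a)!$, since we must order the elements inside $A$ and then the elements outside $A$. This count is minimized when $a=\lfloor n/2\rfloor$, giving $\lfloor n/2\rfloor!\lceil n/2\rceil!$ as the minimum number of chains through any single set. This explains the factor $\lfloor n/2\rfloor!\lceil n/2\rceil!$ in the bound and tells me the right normalization.

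Next I would estimate, via Bonferroni, the number of chains meeting at least one member of $\mathcal{F}$. Writing $N(A)$ for the number of chains through $A$, the first Bonferroni inequality gives
\begin{equation}
\#\{\text{chains meeting }\mathcal{F}\} \ge \sum_{A\in\mathcal{F}} N(A) - \sum_{\{A,B\}\subseteq\mathcal{F}} N(A,B),
\end{equation}
where $N(A,B)$ is the number of chains passing through both $A$ and $B$. The first sum is at least $t\cdot\lfloor n/2\rfloor!\lceil n/2\rceil!$ by the minimization above. So the whole argument reduces to bounding the pairwise overlap terms $N(A,B)$ from above by something like $\frac{1}{n}\lfloor n/2\rfloor!\lceil n/2\rceil!$ on average. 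If a chain passes through both $A$ and $B$ then $A$ and $B$ must be comparable, say $A\subseteq B$; otherwise $N(A,B)=0$. So only comparable pairs contribute, and for a comparable pair with $|A|=a\le b=|B|$ we get $N(A,B)=a!(b-a)!(n-b)!$.

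The crux is therefore to show that for every comparable pair $A\subsetneq B$ one has
\begin{equation}
a!\,(b-a)!\,(n-b)! \;\le\; \frac{1}{n}\,\lfloor n/2\rfloor!\,\lceil n/2\rceil!,
\end{equation}
so that $\sum_{\{A,B\}} N(A,B)\le \binom{t}{2}\frac{1}{n}\lfloor n/2\rfloor!\lceil n/2\rceil!$ and the two Bonferroni terms combine to give exactly the claimed bound. I would prove this inequality by comparing $a!(b-a)!(n-b)!$ with the minimum single-set count $\lfloor n/2\rfloor!\lceil n/2\rceil!$: since $A\subsetneq B$ forces $a<b$, the middle factor $(b-a)!$ accounts for at least one "extra" level of splitting, and a short manipulation of factorial ratios should yield the saving factor of $1/n$. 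I expect this factorial inequality — showing the comparability gap always buys a factor of $1/n$, uniformly over all $0\le a<b\le n$ — to be the main technical obstacle, since the worst case must be located carefully (the extremes $a=0$ or $b=n$, or $b=a+1$ near the middle, are the natural candidates to check). Once that inequality is in hand, substituting the two bounds into the Bonferroni inequality gives
\begin{equation}
\#\{\text{chains meeting }\mathcal{F}\}\ge \left(t-\frac{\binom{t}{2}}{n}\right)\lfloor n/2\rfloor!\,\lceil n/2\rceil!,
\end{equation}
which is precisely the statement of the lemma.
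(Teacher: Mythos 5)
Your Bonferroni set-up is valid and the first-moment bound $\sum_{A\in\mathcal{F}}N(A)\ge t\lfloor n/2\rfloor!\lceil n/2\rceil!$ is correct, but the inequality you identify as the crux is false, and not only in degenerate cases. For $A=\emptyset$, $B=S$ one has $N(A,B)=n!$, which exceeds $\frac1n\lfloor n/2\rfloor!\lceil n/2\rceil!$ by a factor of $n\binom{n}{\lfloor n/2\rfloor}$. More tellingly, it fails even for the most favourable nontrivial pairs: with $n$ even, $a=n/2$ and $b=n/2+1$,
\[
a!\,(b-a)!\,(n-b)! \;=\; \frac{(n/2)!\,(n/2)!}{n/2} \;=\; \frac{2}{n}\,\lfloor n/2\rfloor!\,\lceil n/2\rceil!,
\]
exactly twice your target, so the bound also fails ``on average'' (take $\mathcal{F}$ to consist of just these two sets). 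No manipulation of factorial ratios can rescue this: the comparability gap does not buy a factor $1/n$ of the \emph{middle} level's chain count; what it buys is roughly a factor $1/n$ of the number of chains through the smaller set of the pair, and that number can itself be as large as $n!$.

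The controllable quantity is this relative overlap: for $A\subsetneq B$ with $|A|=a$ and $|B|=b\le n-1$,
\[
\frac{N(A,B)}{N(A)}=\frac{(b-a)!\,(n-b)!}{(n-a)!}=\binom{n-a}{b-a}^{-1}\le\frac{1}{n-a},
\]
which is at most $2/n$ once $a\le\lfloor n/2\rfloor$. This is what the paper exploits, and it requires two devices absent from your plan: an induction on $t$ that peels off a set $F_t$ of \emph{minimum} size, so that every overlap involving $F_t$ is measured against $N(F_t)$ rather than against $\lfloor n/2\rfloor!\lceil n/2\rceil!$, and the observation that replacing $\mathcal{F}$ by the family of complements does not change the number of intersecting chains, which is how one forces $|F_t|\le\lfloor n/2\rfloor$. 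A repaired global Bonferroni argument along these lines is possible (charge each comparable pair to whichever of its two sets makes the ratio small, handling $\emptyset$ and $S$ separately), but either route loses $2/n$ per pair rather than $1/n$, yielding $\left(t-\frac{2\binom{t}{2}}{n}\right)\lfloor n/2\rfloor!\lceil n/2\rceil!$; this is still of the form $t(1-O(t/n))\lfloor n/2\rfloor!\lceil n/2\rceil!$ and amply sufficient for Theorem \ref{upperthm}, though it is not the stated constant (the paper's own closing summation quietly carries the same factor-of-two discrepancy).
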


\begin{proof}
We prove the lemma by induction on $t$. The statement is true for $t=1$, as the number of chains passing through a subset $F$ is $|F|!(n-|F|)!\geq\lfloor n/2\rfloor!\lceil n/2 \rceil!$. Now let $t\geq 2$, and $\mathcal{F}=\{F_1, F_2, \dots F_t\}$. Since taking complements does not change the number of intersecting chains, we may assume that some set of $\mathcal{F}$ has size at most $\lfloor n/2\rfloor$. We can also assume that $F_t$ is one of the smallest subsets.

By induction, the number of chains intersecting $\mathcal{F}\backslash \{F_t\}$ is at least
\begin{equation} \left(t-1-\frac{{t-1 \choose 2}}{n}\right)\lfloor n/2\rfloor!\lceil n/2 \rceil!. \end{equation}
The number of chains through $F_t$ is $|F_t|!(n-|F_t|)!$. Assume that $F_t\subset F_i$ for some $i\in [1,~n-1]$. The number of chains intersecting both $F_t$ and $F_i$ is $|F_t|!(|F_i|-|F_t|)!(n-|F_i|)!\leq |F_t|!(n-|F_t|-1)!$. So there are at least
\begin{equation} |F_t|!(n-|F_t|)!\left(1-\frac{t-1}{n-|F_t|}\right)\geq
\lfloor n/2\rfloor!\lceil n/2 \rceil!\left(1-\frac{2(t-1)}{n}\right)\end{equation}
chains that intersect $\mathcal{F}$ only in $F_t$. The statement of the lemma follows after summation: \begin{equation} \left(t-1-\frac{{t-1 \choose 2}}{n}\right)+\left(1-\frac{2(t-1)}{n}\right)=t-\frac{{t \choose 2}}{n}. \end{equation}
\end{proof}

\begin{theorem}\label{upperthm}
For any finite poset $P$
\begin{equation}
M_j(P,n)\leq \frac{1}{t_j(P)}{n \choose \lfloor n/2\rfloor}(1+O(n^{-1}))
\end{equation}
holds for $j=1,2$.
\end{theorem}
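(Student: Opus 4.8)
The plan is to bound $M_j(P,n)$ by a double-counting argument over full chains in $B_n$, using Lemma~\ref{chcount} as the key combinatorial input. The total number of maximal chains in $B_n$ is $n!$. The idea is that each copy of $P$ in our incomparable family occupies a sizable chunk of these chains, and incomparability will force these chunks to be largely disjoint.

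Let me think about the structure. We have $M := M_j(P,n)$ embeddings $f_1,\dots,f_M$, with images $\mathrm{Im}(f_1),\dots,\mathrm{Im}(f_M)$ pairwise incomparable. The convex hull $conv(\mathrm{Im}(f_i))$ is a subset of $B_n$; by definition of $t_j(P)$, one might hope $|conv(\mathrm{Im}(f_i))| \geq t_j(P)$, but this is subtle: $t_j(P)$ is the minimum over all embeddings and all ground set sizes, so indeed $|conv(\mathrm{Im}(f_i))| \geq t_j(P)$ holds for each $i$. That's the link to $t_j(P)$.

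So here's my approach. First I would observe that incomparability of $\mathrm{Im}(f_i)$ and $\mathrm{Im}(f_{i'})$ implies that the convex hulls are essentially disjoint along chains: a single full chain cannot meet both $\mathrm{Im}(f_i)$ and $\mathrm{Im}(f_{i'})$, because if a chain hit a set in $\mathrm{Im}(f_i)$ and a set in $\mathrm{Im}(f_{i'})$, those two sets would be comparable (both lie on the chain), contradicting incomparability. Next, apply Lemma~\ref{chcount} to $\mathcal{F} = \mathrm{Im}(f_i)$, which has some size $t = |P|$ (fixed, independent of $n$). The lemma says the number of chains meeting $\mathrm{Im}(f_i)$ is at least $\bigl(t - \binom{t}{2}/n\bigr)\lfloor n/2\rfloor!\,\lceil n/2\rceil!$. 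Since distinct images are hit by disjoint sets of chains, summing over $i=1,\dots,M$ gives
\begin{equation}
M\left(t-\frac{\binom{t}{2}}{n}\right)\lfloor n/2\rfloor!\,\lceil n/2\rceil! \leq n!.
\end{equation}
This immediately yields $M \leq \frac{n!}{\lfloor n/2\rfloor!\,\lceil n/2\rceil!}\cdot\frac{1}{t}(1+O(n^{-1})) = \binom{n}{\lfloor n/2\rfloor}\cdot\frac{1}{t}(1+O(n^{-1}))$.

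The obstacle is that this gives $t = |P|$ in the denominator, not $t_j(P)$, and $|P| \le t_j(P)$ in general (the convex hull can be strictly larger than the poset itself). So the chains-through-the-image count is too weak. The fix, and the main difficulty, is to count chains meeting the entire convex hull $conv(\mathrm{Im}(f_i))$ rather than just $\mathrm{Im}(f_i)$, and to argue these hull-hitting chain sets are still disjoint across $i$. Disjointness should persist: if a chain meets $conv(\mathrm{Im}(f_i))$ and $conv(\mathrm{Im}(f_{i'}))$, one should be able to trace back to comparable elements of the two images, contradicting incomparability — this step needs care, since hull elements are only sandwiched between image elements, so I would verify that a chain through a hull element of copy $i$ forces a comparability between copies. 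Then I would apply Lemma~\ref{chcount} to $conv(\mathrm{Im}(f_i))$, whose size is at least $t_j(P)$, with $t = |conv(\mathrm{Im}(f_i))|$ bounded below by $t_j(P)$ and above by a constant depending only on $P$ (so the error term stays $O(n^{-1})$). The monotonicity of the lemma's bound in $t$ (for $t$ small relative to $n$) then delivers the clean constant $t_j(P)$ in the denominator, completing the argument.
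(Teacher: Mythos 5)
Your approach is exactly the paper's: pass from the images to their convex hulls, show the hulls still form an incomparable family (so each maximal chain meets at most one hull), lower-bound the number of chains through each hull via Lemma~\ref{chcount}, and divide $n!$ by that. The disjointness step you flag as needing care goes through just as you sketch it: if a chain contained $a\in conv(\mathrm{Im}(f_i))$ and $b\in conv(\mathrm{Im}(f_{i'}))$ with, say, $a\subseteq b$, then by definition of the hull there are $a'\in \mathrm{Im}(f_i)$ and $b'\in \mathrm{Im}(f_{i'})$ with $a'\subseteq a\subseteq b\subseteq b'$, contradicting incomparability of the images; this is verbatim the paper's argument.

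There is, however, one genuinely false claim in your final step: $|conv(\mathrm{Im}(f_i))|$ is \emph{not} bounded above by a constant depending only on $P$. For instance, embedding the two-element chain as $\emptyset\subset [n]$ gives a convex hull equal to all of $B_n$, of size $2^n$. This matters because Lemma~\ref{chcount} applied with $t=|conv(\mathrm{Im}(f_i))|$ gives the bound $\bigl(t-\binom{t}{2}/n\bigr)\lfloor n/2\rfloor!\lceil n/2\rceil!$, which is decreasing in $t$ once $t$ exceeds roughly $n$, and becomes negative (hence useless) for large hulls; so neither the ``monotonicity in $t$'' nor the $O(n^{-1})$ error control survives without the bogus upper bound. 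The repair is one line: apply Lemma~\ref{chcount} not to the whole hull but to an arbitrary subfamily of it of size exactly $t_j(P)$ (which exists since $|conv(\mathrm{Im}(f_i))|\geq t_j(P)$); the number of chains meeting the hull is at least the number meeting that subfamily, and since $t_j(P)$ is a constant the lemma gives $t_j(P)\lfloor n/2\rfloor!\lceil n/2\rceil!\,(1-O(n^{-1}))$ as required. With that substitution your argument coincides with the paper's proof.
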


\begin{proof}
Assume that $f_1, f_2, \dots f_k: P\rightarrow B_n$ are embeddings (induced if $j=2$) such that the family $\{Im(f_i),~ i=1,2,\dots k \}$ is incomparable. Than $\{conv(Im(f_i)),~ i=1,2,\dots k \}$ is also an incomparable family. To see that, assume there are sets $a,b$ such that $a\subseteq b$, $a\in conv(Im(f_i))$, $b\in conv(Im(f_j))$ and $i\not=j$. Then by the definition of the convex hull there are sets $a'\in Im(f_i)$ and $b'\in Im(f_j)$ such that $a'\subseteq a\subseteq b\subseteq b'$. But $a'\not\subseteq b'$ since $\{Im(f_i),~ i=1,2,\dots k \}$ is an incomparable family.

Since the family $\{conv(Im(f_i)),~ i=1,2,\dots k \}$ is incomparable, every chain intersects at most one of its members. By Lemma \ref{chcount}., each $conv(Im(f_i))$ intersects at least $t_j(P)\lfloor n/2\rfloor!\lceil n/2 \rceil!(1-O(n^{-1}))$ chains. Since the total number of chains is $n!$,
\begin{equation}
k \leq \frac{n!}{t_j(P)\lfloor n/2\rfloor!\lceil n/2 \rceil!(1-O(n^{-1}))}=\frac{1}{t_j(P)}{n \choose \lfloor n/2\rfloor}(1+O(n^{-1})).
\end{equation}
\end{proof}

\section{The lower bound}~

In this section our aim is to prove a lower bound on $M_j(P,n)$ by embedding many copies of $P$ to $B_n$. We need the following lemmas for the construction.

\begin{lemma}\label{labeling}
Let $P$ be a finite poset, and let $f: P\rightarrow B_m$ be an embedding. Then we can label the elements of $B_m$ with the numbers $1, 2, \dots 2^m$ such that all the sets get a higher number than any of their subsets, and the numbers assigned to the elements of $conv(Im(f))$ form an interval in $[1, 2^m]$.
\end{lemma}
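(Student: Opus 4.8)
The goal is to produce a linear extension of $B_m$ (a total order refining the subset relation) in which the convex hull $conv(Im(f))$ occupies a contiguous block of labels. The key structural fact I would exploit is that $conv(Im(f))$ is a \emph{convex} subposet: by its very definition, if $a,c \in conv(Im(f))$ and $a \subseteq b \subseteq c$, then $b \in conv(Im(f))$ as well, since the witnesses $a' \subseteq a$ and $c' \subseteq c$ from $Im(f)$ also sandwich $b$. So the plan is to build a topological sort of the Hasse diagram of $B_m$ that keeps all elements of the convex hull consecutive.

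\medskip

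The plan is to produce the labeling by a greedy topological-sort argument, but arranged so that once we ``enter'' the convex hull we do not ``leave'' it until it is exhausted. Concretely, I would assign labels $1, 2, \dots, 2^m$ one at a time, at each step choosing an as-yet-unlabeled set all of whose proper subsets are already labeled (a minimal element of the remaining poset); this guarantees the monotonicity requirement that every set exceeds all its subsets. The refinement is a tie-breaking rule: among the currently available minimal elements, defer choosing any element of $conv(Im(f))$ for as long as possible, i.e. never label a hull element while some non-hull element is still available and eligible; then, once we are \emph{forced} to label a hull element (because every remaining available set lies in the hull), commit to labeling hull elements exclusively until the hull is used up. The heart of the argument is to check that this rule is consistent: once we begin labeling hull elements, every hull element becomes available before any non-hull element does.

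\medskip

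The main obstacle, and the step I expect to require care, is verifying that the hull really can be labeled as an uninterrupted block — that we never get ``stuck'' mid-hull and are compelled to label an outside set. This is where convexity does the work. Suppose we are in the middle of labeling the hull and some element $h \in conv(Im(f))$ is not yet available because one of its subsets $s \subsetneq h$ is unlabeled. I would argue that $s$ must itself lie in the hull: since $h \in conv(Im(f))$ there is a witness $a' \in Im(f)$ with $a' \subseteq h$; the minimal elements of $Im(f)$ lie at the ``bottom'' of the hull, and I would show that any unlabeled subset of a hull element is sandwiched between a hull element below and $h$ above, hence is itself convex-hull by the convexity property. More precisely, the set of already-labeled elements forms a down-set (an order ideal) at every stage of a topological sort, so the unlabeled subsets of $h$ together with $h$ form a structure trapped below $h$; combined with convexity, these must be hull elements. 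Thus the only obstructions to labeling a given hull element are \emph{other} hull elements, so the hull closes up into a single contiguous interval $[\ell+1, \ell + |conv(Im(f))|]$ with no outside labels interleaved.

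\medskip

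Once that consistency is established, the two remaining requirements follow essentially for free: monotonicity holds because we only ever label minimal elements of the remaining poset, and the interval property holds by construction, since all non-hull elements are labeled either strictly before the block begins (those forced before we enter the hull) or strictly after it ends. I would close by noting that the number assigned to the least element of the hull is $\ell + 1$ where $\ell$ is the count of non-hull elements labeled beforehand, and the greatest is $\ell + |conv(Im(f))|$, giving the desired interval in $[1, 2^m]$.
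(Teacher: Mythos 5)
Your overall strategy --- a topological sort of $B_m$ that defers hull elements as long as possible and then labels the whole hull in one uninterrupted run --- is sound and genuinely different in form from the paper's proof, and you correctly identify where the difficulty lies. But the argument you give for that difficult step does not work as stated. You claim that an unlabeled subset $s \subsetneq h$ of a hull element $h$ is ``sandwiched between a hull element below and $h$ above, hence is itself convex-hull by the convexity property.'' That is false: a subset of a hull element need not contain any hull element. Take $Im(f)=\{\{1\},\{1,2\}\}$ in $B_2$, so $conv(Im(f))=\{\{1\},\{1,2\}\}$; the set $\{2\}$ is a proper subset of the hull element $\{1,2\}$ but contains no member of the hull, so convexity says nothing about it, and indeed $\{2\}$ does not lie in the hull. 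Convexity of $conv(Im(f))$ is true, but it is not the property that rescues you here; what you actually need is a reason why all such ``bad'' subsets have already been labeled by the time you are forced to enter the hull.

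That reason exists, and supplying it closes the gap. The bad sets are exactly $\mathcal{F}_1=\{b \mid \exists c\in Im(f)\ b\subsetneq c,\ \nexists a\in Im(f)\ a\subseteq b\}$, and this family is closed under taking subsets. Hence as long as some member of $\mathcal{F}_1$ is unlabeled, a minimal unlabeled member of $\mathcal{F}_1$ has all its proper subsets labeled and is therefore an available \emph{non-hull} element, so your deferral rule forbids entering the hull. Consequently, at the first moment you are forced to label a hull element, $\mathcal{F}_1$ is exhausted; from then on, every unlabeled proper subset of an unlabeled hull element is contained in some $c'\in Im(f)$ and must contain some $a\in Im(f)$, hence lies in the hull itself, so a minimal unlabeled hull element is always available and the run never stalls. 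With this repair your greedy argument is correct. The paper avoids the dynamic analysis entirely by writing down the partition explicitly --- $\mathcal{F}_1$ first, then $conv(Im(f))$, then everything else, each block ordered by cardinality --- and verifying the three cross-block comparisons directly; your corrected argument in effect rediscovers that $\mathcal{F}_1$ is precisely the set that must precede the hull.
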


\begin{proof}
We divide the elements of $B_m$ into three groups:

Let $\mathcal{F}_1=\{b\in B_m \big|~ \exists c\in Im(f)~~b\subset c,~~\nexists a\in Im(f)~~a\subseteq b \}$,

$\mathcal{F}_2=conv(Im(f))=\{b\in B_n ~ \big| ~ \exists a,c\in Im(f) ~~ a\subseteq b\subseteq c\}$

and $\mathcal{F}_3=B_m\backslash (\mathcal{F}_1 \cup \mathcal{F}_2)=\{b\in B_n ~ \big| ~ \nexists c\in Im(f) ~~ b\subseteq c\}$.

We use the numbers of $[1,~ |\mathcal{F}_1|]$ for the sets of $\mathcal{F}_1$, the numbers of $[|\mathcal{F}_1|+1,~ |\mathcal{F}_1|+|\mathcal{F}_2|]$ for the sets of $\mathcal{F}_2$ and the numbers $[|\mathcal{F}_1|+|\mathcal{F}_2|+1,~ 2^m]$ for the sets of $\mathcal{F}_3$. In the groups we assign numbers such that the elements representing larger subsets get larger numbers.

We have to check that if that if $x,y\in B_m$ and $y$ got a larger number than $x$, then $y\not\subset x$.

If $x$ and $y$ are in the same group, than $|x|\leq |y|$, so $y\not\subset x$.
If $x\in\mathcal{F}_1$ and $y\in\mathcal{F}_2$, then $y\not\subset x$, because $y$ contains an element of $Im(f)$ while $x$ does not.
If $x\in\mathcal{F}_1\cup \mathcal{F}_2$ and $y\in\mathcal{F}_3$, then $y\not\subset x$, because $x$ is the subset of an element of $Im(f)$ while $y$ is not.

\end{proof}

\begin{lemma}\label{ordcopies}
Let $P$ be a finite poset and let $\varepsilon' >0$ be fixed. Let $j\in\{1,2\}$. Then there are integers $N, K$ and functions $f_1, f_2, \dots f_K: P\rightarrow B_N$ such that
\begin{enumerate}[(i)]
\item For all $i\in [1, K]$, $f_i$ is an embedding if $j=1$, and an induced embedding if $j=2$.
\item $K\geq \frac{2^N(1-\varepsilon')}{t_j(P)}$.
\item If $i_1<i_2$, $a\in Im(f_{i_1})$ and $b\in Im(f_{i_2})$, then $b\not\subseteq a$.
\end{enumerate}
\end{lemma}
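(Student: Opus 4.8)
The plan is to build the family by a tensor-power construction that starts from a single optimal copy and repeatedly doubles the ground set, each doubling roughly squaring the uncovered fraction. I would first fix a minimizing embedding: choose $m$ and an embedding (induced if $j=2$) $f:P\to B_m$ with $|conv(Im(f))|=t_j(P)=:t$, which exists by the definition of $t_j(P)$. Call a family of (induced) embeddings of $P$ into some $B_d$ a \emph{$P$-packing} if the convex hulls of the copies are pairwise disjoint, each has size exactly $t$, and the family can be ordered so that (iii) holds; its \emph{density} is then (number of copies)$\cdot t/2^d$, i.e.\ the fraction of $B_d$ covered by the hulls. By Lemma~\ref{labeling} the single copy $f$ already gives a $P$-packing of $B_m$ of density $t/2^m=:\rho_0>0$, with (iii) vacuous; moreover Lemma~\ref{labeling} supplies a linear extension of $B_m$ in which $conv(Im(f))$ is an interval, which I would carry along as auxiliary data. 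The heart of the argument is then a doubling step together with a bound on how the density grows.

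For the doubling step, suppose $g_1,\dots,g_p$ is a $P$-packing of $B_d$ of density $\rho$, with covered set $\overline D=\bigcup_a conv(Im(g_a))$ and uncovered set $D=B_d\setminus\overline D$. I identify $B_{2d}$ with $B_d\times B_d$ and introduce two kinds of copies: the \emph{type-$A$} copies $x\mapsto(g_a(x),v)$ for all $a\in[p]$ and all $v\in B_d$, with hulls $conv(Im(g_a))\times\{v\}$ covering $\overline D\times B_d$, and the \emph{type-$B$} copies $x\mapsto(u,g_b(x))$ for all $u\in D$ and all $b\in[p]$, with hulls $\{u\}\times conv(Im(g_b))$ covering $D\times\overline D$. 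Since the first coordinate separates $\overline D$ from $D$, the two kinds have disjoint hulls; within each kind the hulls are disjoint, each hull still has size exactly $t$, and fixing one coordinate to a constant preserves the (induced) embedding property, so (i) holds throughout. The covered fraction becomes $\rho+(1-\rho)\rho=1-(1-\rho)^2$, so iterating $k$ times yields a $P$-packing of $B_{2^k m}$ of density at least $1-(1-\rho_0)^{2^k}$. Choosing $k$ with $(1-\rho_0)^{2^k}<\varepsilon'$ and setting $N=2^k m$ then gives $K\ge(1-\varepsilon')2^N/t_j(P)$, which is exactly (ii).

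The main obstacle is the ordering requirement (iii): I must show that after each doubling the enlarged family is again orderable so that no set of a later copy is contained in a set of an earlier copy, and that this survives all further doublings. Writing $Y\Rightarrow X$ whenever some set of $Y$ lies below some set of $X$, this is the acyclicity of the resulting "must-precede" digraph on copies, after which a topological sort produces the order. I would prove this by induction, carrying a linear extension $\sigma$ of $B_d$ (the one from Lemma~\ref{labeling} at the base, and the second-coordinate-major extension of $\sigma$ at each doubling) and ordering the copies by the rule: a type-$A$ copy $(a,v)$ gets key $(\sigma(v),0,a)$ and a type-$B$ copy $(u,b)$ gets key $(\sigma(u),1,b)$, compared lexicographically. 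Under this extension every type-$A$ hull is again a $\sigma$-interval, so the disjoint-intervals argument handles all type-$A$/type-$A$ comparisons; the delicate point is that a type-$B$ hull is \emph{spread out} rather than an interval, so the real work is to verify that a type-$B$ copy never places a set below a set of an earlier copy and that type-$A$/type-$B$ interactions create no directed cycle. I expect this to follow from the inductive ordering of the $g_a$ together with the interval structure of the hulls and gaps, using the product structure rather than set sizes alone; already the smallest nontrivial instance (a two-element chain embedded in $B_2$ and doubled to $B_4$) shows that the must-precede digraph is acyclic but that a sizes-only argument is insufficient, which is why the careful choice of $\sigma$ and the keyed ordering above is needed.
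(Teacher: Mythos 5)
Your construction and the count for (ii) are fine: the hulls, their disjointness, the density recursion $\rho\mapsto 1-(1-\rho)^2$, and the preservation of the (induced) embedding property all check out, and in fact if you unroll the iterated doubling you recover exactly the paper's single $2^k$-fold product (each copy is ``active'' in one factor, all earlier factors lie in the uncovered set $D$, all later factors are free). The problem is that you never prove (iii): you state the key assignment and then write that you ``expect'' the verification to go through, explicitly flagging the type-$B$ interactions as ``the real work.'' That is the entire content of the lemma --- (i) and (ii) alone are easy --- so this is a genuine gap, not a routine omission.

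Worse, the ordering you propose is actually wrong, so the gap cannot be closed as stated. Take $P$ the three-element antichain with the optimal base embedding $Im(f)=\{\{1\},\{2\},\{3\}\}\subset B_3$, so $t_1(P)=3$ and $D$ contains both $\emptyset$ (strictly below an image set) and $\{1,2\}$ (strictly above one). After one doubling the copies include $\alpha=(g_1,\{1,2\})$ (type $A$) and $\beta=(\emptyset,g_1)$ (type $B$), and $w=(\emptyset,\{1,2\})\in D\times D=D'$ satisfies $h_\beta(y)=(\emptyset,\{1\})\subsetneq w\subsetneq(\{1\},\{1,2\})=h_\alpha(x)$. At the second doubling the type-$A$ copy $(\alpha,w)$ and the type-$B$ copy $(w,\beta)$ have the same free coordinate $w$, so your keys $(\sigma'(w),0,\alpha)$ and $(\sigma'(w),1,\beta)$ place the type-$A$ copy first; yet the later type-$B$ copy contains the set $(w,h_\beta(y))$, which is a subset of the earlier copy's set $(h_\alpha(x),w)$, violating (iii). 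The family itself is still orderable --- the paper orders copies lexicographically by the full vector of per-factor labels with the \emph{leftmost} factor most significant, using Lemma~\ref{labeling} to make each hull an interval in its active factor and requiring all factors before the active one to carry labels outside that interval; in that order $(w,\beta)$ correctly precedes $(\alpha,w)$ because $\nu(\emptyset)<p$. So the fix is not a cleverer tie-break on your second-coordinate-major key but a different ordering principle altogether, which is most cleanly set up on the unrolled $k$-fold product rather than on the recursive doubling.
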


\begin{proof}
Let $P$ be a fixed finite poset. There is embedding (or induced embedding, if $j=2$) $f: P\rightarrow B_m$ for some $m$  such that $|conv(Im(f))|=t_j(P)$. Fix $m$ and $f$. Choose $k\in\mathbb{N}$ such that $\left(1- \frac{t_j(P)}{2^m}\right)^k\leq \varepsilon'$, and let $N=km$. Let $S_1, S_2, \dots S_k$ be disjoint sets of size $m$ and let $S=\bigcup_{i=1}^k S_i$. Consider the elements of $B_N$ as the subsets of $S$.

Let $g_i: P\rightarrow B_{S_i}$ ($i=1, 2, \dots k$) be embeddings that map the elements of $P$ to $m$-element sets the same way as $f$ does. Assign the numbers $1, 2, \dots 2^m$ to the subsets of $S_i$ as in Lemma \ref{labeling}. The elements of $conv(Im(g_i))$ will get the numbers of the interval $I=[p,~ p+t_j(P)-1]$ for all $i$.

We call an embedding $g: P\rightarrow B_S$ ${\it good}$ if there is an index $i\in [1,~k]$ and there are $k-1$ sets $A_1\subseteq S_1,~ A_2\subseteq S_2, \dots A_{i-1}\subseteq S_{i-1},~ A_{i+1}\subseteq S_{i+1}, \dots A_k\subseteq S_k $ such that none of the numbers assigned to $A_1, A_2, \dots A_{i-1}$ is in $I$, and for any $x\in P$, $g(x)\cap S_i=g_i(x)$, and $g(x)\cap (S\backslash S_i)=\displaystyle\bigcup_{r\in [n]\backslash \{i\}} A_r$.

The number of good functions is
\begin{equation}
\sum_{i=1}^{k} (2^m-t_j(P))^{i-1}\cdot(2^m)^{k-i}=2^{N-m} \sum_{i=1}^{k} \left(1-\frac{t_j(P)}{2^m}\right)^{i-1}=
\end{equation}
\begin{equation}\nonumber
2^{N-m}\frac{1-\left(1-\frac{t_j(P)}{2^m}\right)^k}{\frac{t_j(P)}{2^m}}=
\frac{2^N}{t_j(P)}\left(1-\left(1-\frac{t_j(P)}{2^m}\right)^k\right)\geq \frac{2^N(1-\varepsilon')}{t_j(P)}.
\end{equation}

$f_1, f_2, \dots f_K$ will be the good functions. They are embeddings (induced if $j=2$), and their number is sufficiently large. So $(i)$ and $(ii)$ are satisfied. Now we find an ordering of the good functions that satisfies $(iii)$.

Let $g$ be the good function defined by the index $i$ and the subsets $A_1, A_2, \dots A_{i-1}, A_{i+1}, \dots A_k$. Define the code of $g$ as a vector of length $k$ with coordinates as follows. The first $i-1$ coordinates are numbers assigned to the sets $A_1, A_2, \dots A_{i-1}$ respectively. The $i$th coordinate is $p$, the smallest number in $I$. The last $k-i$ coordinates are numbers assigned to the sets $A_{i+1}, A_{i+2}, \dots A_k$ respectively. Now take the lexicographic ordering of these codes, and assign the names $f_1, f_2, \dots$ to the good functions according to the ordering. ($f_1$ will be the good function whose code comes first in the lexicographic ordering, $f_2$ will be the second and so on.)

Now we can verify $(iii)$. Assume that $A\in Im(f_a)$, $B\in Im(f_b)$, $A\subseteq B$ and $b<a$. Let the $l$th be the first coordinate where the codes of $f_a$ and $f_b$ are different. Since $b<a$, the $l$th coordinate of $a$ is strictly larger than that of $b$, and the first $l-1$ coordinates are not from $I$. That implies that the number assigned to $A\cap S_l$ is strictly larger than the number assigned to $B\cap S_l$. (We use the fact that the numbers assigned to the elements of $conv(Im(g_l))$ form an interval at this step.) Then $A\cap S_l\not\subseteq B\cap S_l$ (contradicting $A\subseteq B$) as the labeling of the elements of $S_l$ is done according to Lemma \ref{labeling}.
\end{proof}

\begin{theorem}\label{lowerthm}
Let $P$ be a finite poset, $\varepsilon >0$ and $j\in \{1,2\}$. Then for all large enough $n$
\begin{equation}
M_j(P,n)\geq  \frac{1}{t_j(P)}{n \choose \lfloor n/2\rfloor}(1-\varepsilon).
\end{equation}
\end{theorem}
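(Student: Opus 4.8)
The plan is to convert the \emph{ordered} copies produced by Lemma \ref{ordcopies} into an \emph{incomparable} family by spreading them across a tower of antichains of a disjoint Boolean lattice. Fix $\varepsilon'>0$ (to be chosen small in terms of $\varepsilon$) and let $N,K,f_1,\dots,f_K$ be as guaranteed by Lemma \ref{ordcopies}, so that $f_1,\dots,f_K:P\to B_N$ are (induced) embeddings with $K\ge \frac{2^N(1-\varepsilon')}{t_j(P)}$ satisfying the monotonicity condition (iii). For $n>N$ write $[n]=[N]\cup R$ with $R=[n]\setminus[N]$ and identify $B_n$ with $B_{[N]}\times B_R$ via $A\cup T\leftrightarrow(A,T)$, where $A\subseteq[N]$, $T\subseteq R$. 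For an index $i$ and a \emph{tag} $T\subseteq R$ define the tagged map $g_{i,T}(x)=f_i(x)\cup T$; since tagging by a fixed $T$ is an order-isomorphism onto $\{A\cup T: A\subseteq[N]\}$, each $g_{i,T}$ is again an (induced) embedding of $P$ into $B_n$, and $Im(g_{i,T})\cap R=T$, so distinct tags yield distinct copies.

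The central bookkeeping is that for $x,y\in P$ one has $g_{i,T}(x)\subseteq g_{i',T'}(y)$ if and only if $f_i(x)\subseteq f_{i'}(y)$ and $T\subseteq T'$. Hence if the tags $T,T'$ are incomparable in $B_R$ the copies $Im(g_{i,T})$ and $Im(g_{i',T'})$ are automatically incomparable, whatever $i,i'$ are; and if $T\subsetneq T'$ the only possible inclusion is $g_{i,T}(x)\subseteq g_{i',T'}(y)$, which forces $f_i(x)\subseteq f_{i'}(y)$, and this is ruled out by (iii) precisely when $i>i'$. So to build an incomparable family I will place tags only on finitely many levels of $B_R$ and assign copy indices so that, along any containment of tags, the \emph{smaller} tag carries the \emph{larger} index.

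Concretely I would take the $K$ levels of $B_R$ closest to the middle, order them by increasing cardinality as $\Lambda_1,\dots,\Lambda_K$, and use the copy $f_{K+1-q}$ for every tag in $\Lambda_q$. Two tags in the same level are incomparable (equal size, distinct), so those copies are incomparable; two comparable tags $T\in\Lambda_q$, $T'\in\Lambda_{q'}$ with $q<q'$ satisfy $T\subsetneq T'$, and then the smaller tag $T$ carries index $K+1-q>K+1-q'$, so (iii) gives incomparability. Thus the whole family $\{Im(g_{K+1-q,\,T}): 1\le q\le K,\ T\in\Lambda_q\}$ is incomparable, giving $M_j(P,n)\ge\sum_{q=1}^{K}|\Lambda_q|$. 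With $r=n-N$ and $K$ fixed, each of the $K$ middle levels of $B_R$ has size $(1-o(1)){r \choose \lfloor r/2\rfloor}$, so $\sum_{q=1}^{K}|\Lambda_q|\ge K(1-o(1)){n-N \choose \lfloor (n-N)/2\rfloor}$. Combining $K\ge\frac{2^N(1-\varepsilon')}{t_j(P)}$ with the Stirling estimate $2^N{n-N \choose \lfloor (n-N)/2\rfloor}={n \choose \lfloor n/2\rfloor}(1+o(1))$ (the ratio is $\sqrt{n/(n-N)}\to1$) turns this into $\frac{1-\varepsilon'}{t_j(P)}{n \choose \lfloor n/2\rfloor}(1-o(1))$, which is at least $\frac{1}{t_j(P)}{n \choose \lfloor n/2\rfloor}(1-\varepsilon)$ once $\varepsilon'$ is small and $n$ large.

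I expect the main obstacle to be the second step: realizing that (iii) can only neutralize \emph{comparable} tags when indices decrease as tags grow, so that tags must be confined to at most $K$ antichains — a chain of tags would demand a strictly decreasing chain of $K+1$ indices, which is impossible since $K$ is a constant. Pinning down the correct orientation of the index assignment, and confirming that exactly $K$ near-middle levels recover the factor $2^N/t_j(P)$ after the Stirling comparison, is where the care lies; the verification that each $g_{i,T}$ is an (induced) embedding and the final asymptotics are then routine.
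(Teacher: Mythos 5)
Your proposal is correct and follows essentially the same route as the paper: both take the $K$ ordered copies from Lemma \ref{ordcopies}, tag them with subsets drawn from $K$ consecutive near-middle levels of the Boolean lattice on the complementary $n-N$ elements, and assign indices so that smaller tags carry larger copy indices, letting condition (iii) eliminate the only possible direction of inclusion. The only differences are cosmetic (the paper uses the $K$ levels just below the middle and an exact inequality $2^N\binom{n-N}{\lfloor (n-N)/2\rfloor}\ge\binom{n}{\lfloor n/2\rfloor}$ in place of your Stirling asymptotics).
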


\begin{proof}
Choose $N,~K$, and $f_1, f_2, \dots f_K: P\rightarrow B_N$ as in Lemma \ref{ordcopies}. (Use $\varepsilon'=\frac{\varepsilon}{2}$). Consider the elements of $B_N$ as the subsets of a set $S$ of size $N$. Let $R$ be a set such that $S\subset R$ and $|R|=n$. Let $Q=R\backslash S$. Let
\begin{equation}
\mathcal{Q}=\left\{T\subset Q~ \Big|~ \left\lfloor\frac{n-N}{2}\right\rfloor-K \leq |T| \leq \left\lfloor\frac{n-N}{2}\right\rfloor-1 \right\}.
\end{equation}

If $n$ is large enough, then the following inequality is true:
\begin{equation}
  \sum_{i=1}^K {n-N \choose \left\lfloor\frac{n-N}{2}\right\rfloor-i} \geq K\cdot {n \choose \left\lfloor\frac{n-N}{2}\right\rfloor}\left(1-\frac{\varepsilon}{2}\right).
\end{equation}

Then
\begin{equation}
|\mathcal{Q}|\geq
K\cdot {n-N \choose \left\lfloor\frac{n-N}{2}\right\rfloor}\left(1-\frac{\varepsilon}{2}\right) \geq
\frac{2^N(1-\frac{\varepsilon}{2})}{t_j(P)} \cdot
2^{-N}{n \choose \left\lfloor\frac{n}{2}\right\rfloor}\left(1-\frac{\varepsilon}{2}\right) \geq
\frac{1}{t_j(P)}{n \choose \lfloor n/2\rfloor}(1-\varepsilon).
\end{equation}

We used that $2^N {n-N \choose \left\lfloor\frac{n-N}{2}\right\rfloor}\geq {n \choose \left\lfloor\frac{n}{2}\right\rfloor}$. It can be verified easily by induction on $N$.

We define an embedding $f_T: P\rightarrow B_R$ (induced if $j=2$) for every $T\in\mathcal{Q}$ such that $\{Im(f_T)~\big|~T\in\mathcal{Q}\}$ is an incomparable family. For any $x\in P$ let $f_T(x)\cap Q=T$ and $f_T(x)\cap S=f_{\lfloor \frac{n-N}{2} \rfloor-|T|}(x)$. Then $f_T$ is obviously an embedding (induced if $j=2$).

Now we check that the family $\{Im(f_T)~\big|~T\in\mathcal{Q}\}$ is incomparable. Let $T_1, T_2\in \mathcal{Q}$ be different sets. Assume that $A_1\in Im(f_{T_1})$, $A_2\in Im(f_{T_2})$ and $A_1\subseteq A_2$. Then $T_1=A_1\cap Q\subseteq A_2\cap Q=T_2$. Since $T_1\not= T_2$, $|T_1|<|T_2|$ holds. Since $A_1\cap S\in Im(f_{\lfloor \frac{n-N}{2} \rfloor-|T_1|})$ and $A_2\cap S\in Im(f_{\lfloor \frac{n-N}{2} \rfloor-|T_2|})$, Lemma \ref{ordcopies}. $(3)$ implies $|A_1\cap S|\not\subseteq |A_2\cap S|$. It contradicts $A_1\subseteq A_2$, so the family is indeed incomparable.

We found at least $\frac{1}{t_j(P)}{n \choose \lfloor n/2\rfloor}(1-\varepsilon)$ different embeddings  (induced if $j=2$) of $P$ to $B_R$, where $|R|=n$, such that the resulting copies form an incomparable family. It proves the theorem.
\end{proof}

\section{Remarks}~
In this section we exactly determine the maximum number of incomparable copies for certain posets. The problem has already been solved for the path posets.

\begin{theorem}\label{pathlemma} {\rm {\bf (Griggs, Stahl, Trotter) \cite{gst}}} Let $P^{h+1}$ be the path poset with $h+1$ elements. Then for all $n\geq h$
\begin{equation}M_1(P^{h+1},n)={n-h\choose \left\lfloor {n-h\over 2} \right\rfloor }.\end{equation}
\end{theorem}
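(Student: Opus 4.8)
The plan is to prove matching lower and upper bounds; the lower bound comes from an explicit construction, while getting the \emph{exact} value (not just the asymptotics already supplied by Theorem \ref{mainthm}) is the real work. As a sanity check on the target, observe that for the path poset $t_1(P^{h+1})=2^h$: the convex hull of a copy $A_0\subset\cdots\subset A_h$ is the interval $[A_0,A_h]$, of size $2^{|A_h\setminus A_0|}$, which is smallest ($=2^h$) exactly when the sizes are consecutive. Thus Theorem \ref{mainthm} gives $M_1(P^{h+1},n)\sim 2^{-h}\binom{n}{\lfloor n/2\rfloor}$, and since $\binom{n-h}{\lfloor(n-h)/2\rfloor}\sim 2^{-h}\binom{n}{\lfloor n/2\rfloor}$ the theorem is the exact, finite-$n$ refinement of this.

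For the lower bound I would fix $h$ distinguished elements, say $[h]$, and take the middle layer $\mathcal{A}$ of $B_{\{h+1,\dots,n\}}$, so that $|\mathcal{A}|=\binom{n-h}{\lfloor(n-h)/2\rfloor}$. To each $T\in\mathcal{A}$ I attach the saturated chain
\[ T\subset T\cup\{1\}\subset T\cup\{1,2\}\subset\cdots\subset T\cup[h], \]
an embedding of $P^{h+1}$. Every set in the $T$-copy meets $\{h+1,\dots,n\}$ in exactly $T$, so if a set of the $T$-copy were comparable to a set of the $T'$-copy we would get $T\subseteq T'$ or $T'\subseteq T$; as $\mathcal{A}$ is an antichain this forces $T=T'$. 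Hence these $\binom{n-h}{\lfloor(n-h)/2\rfloor}$ copies form an incomparable family, giving the lower bound.

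For the upper bound I would run the chain-counting of Theorem \ref{upperthm}, sharpened to an exact constant. Exactly as there, the convex hulls $[A_0^{(s)},A_h^{(s)}]$ of incomparable copies are pairwise incomparable, so every maximal chain of $B_n$ meets the sets of at most one copy. Writing $c(n,h)$ for the minimum, over all $(h+1)$-chains $A_0\subset\cdots\subset A_h$, of the number of maximal chains meeting at least one $A_i$, summation over copies yields $M_1(P^{h+1},n)\cdot c(n,h)\le n!$. The crux is then to show (i) the minimum is attained by the centered saturated copy, i.e. that shifting a copy toward the middle layers and making its sizes consecutive can only decrease the number of maximal chains it meets, and (ii) to evaluate this minimum and verify $\lfloor n!/c(n,h)\rfloor=\binom{n-h}{\lfloor(n-h)/2\rfloor}$. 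For $h=1$ this is transparent: inclusion--exclusion over the two nested sets gives $c(n,1)=n\cdot q!(n-1-q)!$ with $q=\lfloor(n-1)/2\rfloor$, so $n!/c(n,1)=\binom{n-1}{\lfloor(n-1)/2\rfloor}$ on the nose (this is precisely the Katona--Tarj\'an case).

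I expect steps (i)--(ii) to be the main obstacle. The inclusion--exclusion over $h+1$ nested sets is manageable but delicate, and one must check that, although $n!/c(n,h)$ slightly exceeds the target for $h\ge 2$ (for instance $720/c(6,2)=720/104\approx 6.92$), the floor still lands exactly on $\binom{n-h}{\lfloor(n-h)/2\rfloor}$ and never one higher. If the direct estimate of $c(n,h)$ and its floor prove awkward, the alternative I would pursue is Katona's cyclic permutation method: average over circular arrangements of $[n]$ and reduce to the one-dimensional question of how many pairwise-incomparable chains of $h+1$ nested \emph{arcs} a single circular arrangement can support, a problem that should admit an exact count integrating to the stated bound and which historically yields tight Sperner-type constants of exactly this flavour.
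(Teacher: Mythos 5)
Your lower bound is correct and is essentially the paper's own construction. The genuine gap is in the upper bound, which you never actually establish: the quantity you propose to minimize is the wrong one. You define $c(n,h)$ as the minimum number of maximal chains meeting at least one of the sets $A_0,\dots,A_h$ of a copy, i.e.\ meeting the \emph{image}, and this is what produces the awkward non-integer $720/104\approx 6.92$ and forces you into the unproven claim that $\lfloor n!/c(n,h)\rfloor$ always lands exactly on $\binom{n-h}{\lfloor(n-h)/2\rfloor}$; steps (i) and (ii), which you yourself flag as the main obstacle, are left open. The quantity you should count is the number of maximal chains meeting the \emph{convex hull} $[A_0,A_h]$ --- which is what the first sentence of your upper-bound paragraph already licenses, since it is the convex hulls that are pairwise incomparable. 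That count is exact and clean: a maximal chain, viewed as an ordering of $[n]$, meets $[A_0,A_h]$ if and only if every element of $A_0$ precedes every element of $[n]\setminus A_h$, which happens for exactly $n!\big/\binom{|A_0|+|\overline{A_h}|}{|A_0|}$ chains; since $|A_0|+|\overline{A_h}|=n-|A_h\setminus A_0|\le n-h$, each copy accounts for at least $n!\big/\binom{n-h}{\lfloor(n-h)/2\rfloor}$ chains, and $m\le\binom{n-h}{\lfloor(n-h)/2\rfloor}$ follows with no floor function and no extremality argument at all. (For $n=6$, $h=2$ the convex-hull count of the centered saturated copy is $120$, not $104$, and $720/120=6$ exactly.)

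The paper reaches the same inequality by a different packaging: it takes the maximal and minimal sets $C_i\supset D_i$ of each copy, notes that the pairs $(\overline{C_i},D_i)$ satisfy $\overline{C_i}\cap D_i=\emptyset$ and $\overline{C_i}\cap D_j\ne\emptyset$ for $i\ne j$ (the latter from incomparability), and applies Bollob\'as's set-pair inequality $\sum_i \binom{|\overline{C_i}|+|D_i|}{|\overline{C_i}|}^{-1}\le 1$ together with $|\overline{C_i}|+|D_i|\le n-h$. This is precisely the convex-hull chain count above in disguise. Your fallback suggestion of the cyclic-permutation method would likely also work but is likewise not carried out. As it stands, the proposal proves only the lower bound.
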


We include an alternative proof for the sake of completeness. The following theorem will be used.

\begin{theorem} {\rm {\bf (Bollob\'as) \cite{B}}} Let $(A_i, B_i)~ (1\leq i\leq m)$ be a family of disjoint subsets
($A_i\cap B_i=\emptyset$), where $A_i\cap B_j\not= \emptyset$ holds for $i\not= j~ (1\leq i,j\leq m)$.
Then
\begin{equation}\sum_{i=1}^m{1\over {|A_i|+|B_i|\choose |A_i|}}\leq 1.\end{equation}
\end{theorem}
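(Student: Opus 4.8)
The plan is to use the permutation (probabilistic) method that is standard for set-pair inequalities. Let $X=\bigcup_{i=1}^m (A_i\cup B_i)$ be the finite ground set, and choose a permutation $\pi$ of $X$ uniformly at random. For each index $i$, call $\pi$ \emph{good for $i$} if, in the linear order given by $\pi$, every element of $A_i$ precedes every element of $B_i$. The whole argument rests on two observations: the probability of being good for $i$ has exactly the reciprocal-binomial value appearing in the sum, and two distinct good events can never occur simultaneously.

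First I would compute $\Pr[\pi \text{ good for } i]$. Since $A_i$ and $B_i$ are disjoint, this event depends only on the relative order of the $|A_i|+|B_i|$ elements of $A_i\cup B_i$. Among the $(|A_i|+|B_i|)!$ equally likely relative orders, exactly $|A_i|!\,|B_i|!$ of them place all of $A_i$ before all of $B_i$, so
\[
\Pr[\pi \text{ good for } i]=\frac{|A_i|!\,|B_i|!}{(|A_i|+|B_i|)!}=\binom{|A_i|+|B_i|}{|A_i|}^{-1}.
\]

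The heart of the argument — and the step I expect to be the main obstacle — is showing that no permutation can be good for two different indices, so that the good events are pairwise disjoint. This is where the cross-intersecting hypothesis enters. Suppose $\pi$ were good for both $i$ and $j$ with $i\neq j$. Applying the hypothesis to the ordered pairs $(i,j)$ and $(j,i)$ gives $A_i\cap B_j\neq\emptyset$ and $A_j\cap B_i\neq\emptyset$, so we may fix elements $x\in A_i\cap B_j$ and $y\in A_j\cap B_i$. Goodness for $i$ forces $x\in A_i$ to precede $y\in B_i$ in $\pi$, while goodness for $j$ forces $y\in A_j$ to precede $x\in B_j$; these two order relations contradict each other. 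Hence the events $\{\pi \text{ good for } i\}$ are mutually exclusive.

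Once disjointness is established the conclusion is immediate: the probabilities of pairwise disjoint events sum to at most $1$, giving
\[
\sum_{i=1}^m\binom{|A_i|+|B_i|}{|A_i|}^{-1}=\sum_{i=1}^m\Pr[\pi \text{ good for } i]\leq 1,
\]
which is exactly the claimed inequality.
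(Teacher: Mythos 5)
Your proof is correct. Note, however, that the paper does not actually prove this statement: it is quoted as a known theorem of Bollob\'as with a citation to the original 1965 article, and is then used as a black box in the proof of the path-poset result. So there is no in-paper argument to compare against; what you have supplied is the classical permutation (probabilistic) proof of the set-pair inequality. Both of your key steps check out: the event that all of $A_i$ precedes all of $B_i$ under a uniformly random permutation of the ground set has probability $|A_i|!\,|B_i|!/(|A_i|+|B_i|)! = \binom{|A_i|+|B_i|}{|A_i|}^{-1}$, and the mutual exclusivity argument is airtight --- the elements $x\in A_i\cap B_j$ and $y\in A_j\cap B_i$ are distinct (else $x=y$ would lie in $A_i\cap B_i$, contradicting disjointness), and the two goodness conditions force $x$ before $y$ and $y$ before $x$ simultaneously. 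Summing the probabilities of pairwise disjoint events then gives the inequality. This is a complete and self-contained proof of the cited theorem.
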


\begin{proof} (Theorem \ref{pathlemma}.)
Consider an embedding of $P^{h+1}$ into $B_n$. Let its maximal and minimal elements embedded into $C_i$ and $D_i$ respectively.  $C_i\supset D_i$ implies $\overline{C}_i\cap D_i=\emptyset.$ On the other hand, choosing these sets for all $i=1, \ldots ,m$, the incomparability conditions imply $\overline{C}_i\cap D_j\not= \emptyset$. The theorem of Bollob\'as can be applied for the pairs $(\overline{C}_i, D_i)$:
\begin{equation}\label{bollineq}
\sum_{i=1}^m {1\over {|\overline{C}_i|+|B_i|\choose |\overline{C}_i|}}\leq 1.
\end{equation}

$|C_i-D_i|\geq h$ results in $|\overline{C_i}|+|D_i|\leq n-h.$
Therefore the left hand side of (\ref{bollineq}) can be decreased in the following way.
\begin{equation}
{m \over {n-h\choose \left\lfloor {n-h\over 2} \right\rfloor }}=\sum_{i=1}^m {1 \over {n-h\choose \left\lfloor {n-h\over 2} \right\rfloor }}\leq \sum_{i=1}^m {1\over {|\overline{C}_i|+|B_i|\choose |\overline{C}_i|}}\leq 1
\end{equation}
holds, proving the upper bound in the theorem.

The lower bound can be seen by an easy construction. Let $G\subset\{h+1, h+2,\dots n\}$ be a subset of size $\left\lfloor {n-h\over 2} \right\rfloor$. Then $P^{h+1}$ can be embedded to the sets $G,~\{1\}\cup G,~\{1, 2\}\cup G,\dots ~\{1,2\dots h\}\cup G$. We have ${n-h\choose \left\lfloor {n-h\over 2} \right\rfloor }$ such embeddings and the resulting copies form an incomparable family. This proves the lower bound.
\end{proof}

\begin{definition}
Let $h(P)$ be the {\it height} of the poset $P$, that is the number of elements in a longest chain in $P$ minus 1. We say that $P$ is {\it thin} if it can be embedded into $B_{h(P)}$. $P$ is called {\it slim} if it has an induced embedding into $B_{h(P)}$.
\end{definition}

\begin{theorem}\label{thinthm}
If $P$ is a thin poset, then
\begin{equation} M_1(P,n)={n-h\choose \left\lfloor {n-h\over 2} \right\rfloor }. \end{equation}
If $P$ is slim, then
\begin{equation} M_1(P,n)=M_2(P,n)={n-h\choose \left\lfloor {n-h\over 2} \right\rfloor }. \end{equation}
\end{theorem}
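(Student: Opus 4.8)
The plan is to observe that for a thin or slim poset $P$, the height $h = h(P)$ controls the size of the minimal convex hull, and then to invoke the asymptotic machinery of Theorem~\ref{mainthm} together with the exact result for paths (Theorem~\ref{pathlemma}) to pin down the exact value of $M_j(P,n)$. The key structural fact I would establish first is that if $P$ is thin, then $t_1(P) = 2^h$, and if $P$ is slim, then $t_1(P) = t_2(P) = 2^h$. Indeed, any embedding of $P$ contains a chain of length $h$, so its image has a minimal element $D$ and a maximal element $C$ with $|C \setminus D| \geq h$; hence the convex hull, which must contain every set between $D$ and $C$, has size at least $2^{|C\setminus D|} \geq 2^h$. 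The thin (slim) hypothesis furnishes an embedding into $B_h$ whose image is sandwiched between $\emptyset$ and $[h]$, so $|conv(Im(f))| = 2^h$ is achieved, giving equality $t_j(P) = 2^h$.

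Next I would turn the asymptotic statement into an exact one by a sandwiching argument. For the upper bound, I would run the same convex-hull-plus-chain-counting argument as in Theorem~\ref{upperthm}, but sharpened: since every embedding of $P$ has a minimal element $D_i$ and maximal element $C_i$ with $|C_i \setminus D_i| \geq h$, I can apply Bollob\'as's theorem to the pairs $(\overline{C_i}, D_i)$ exactly as in the proof of Theorem~\ref{pathlemma}. The incomparability of the copies forces $\overline{C_i} \cap D_j \neq \emptyset$ for $i \neq j$, and $|\overline{C_i}| + |D_i| \leq n - h$ yields
\begin{equation}
\frac{M_j(P,n)}{{n-h \choose \lfloor (n-h)/2 \rfloor}} \leq \sum_i \frac{1}{{|\overline{C_i}|+|D_i| \choose |\overline{C_i}|}} \leq 1,
\end{equation}
so $M_j(P,n) \leq {n-h \choose \lfloor (n-h)/2 \rfloor}$. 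For the lower bound, I would adapt the path construction: fix a thin (or slim) embedding $f: P \to B_{[h]}$, pick $G \subseteq \{h+1, \dots, n\}$ of size $\lfloor (n-h)/2 \rfloor$, and define the copy associated to $G$ by taking $f(x) \cup G$ for each $x \in P$. The map $f_G(x) = f(x) \cup G$ is an embedding (induced when $f$ is induced, i.e.\ when $P$ is slim), and incomparability across different choices of $G$ follows exactly as in the antichain argument for paths, since two distinct $\lfloor (n-h)/2 \rfloor$-subsets of an $(n-h)$-element set are incomparable. This gives $M_j(P,n) \geq {n-h \choose \lfloor (n-h)/2 \rfloor}$ copies.

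The main obstacle I anticipate is verifying that the sandwiching Bollob\'as argument goes through unchanged for a \emph{general} thin/slim poset rather than a path: the crucial point is that only the minimal and maximal elements of each copy enter the argument, and incomparability of the full images guarantees $\overline{C_i} \cap D_j \neq \emptyset$, so the reduction to two-element chains is automatic. The second delicate point is the induced case for slim posets: I must confirm that adjoining the fixed set $G$ to an induced embedding preserves the ``if and only if'' of the inclusion relation. Since $G$ is common to all $f_G(x)$, we have $f_G(x) \subseteq f_G(y)$ iff $f(x) \subseteq f(y)$ iff $x <_p y$, so induced-ness is preserved and the equality $M_1 = M_2$ for slim posets follows. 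Combining the matching upper and lower bounds completes the proof.
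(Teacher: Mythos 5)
Your proof is correct and follows essentially the same route as the paper: the paper reduces the upper bound to Theorem~\ref{pathlemma} by restricting each copy of $P$ to a longest chain (whose proof is exactly the Bollob\'as set-pair argument on $(\overline{C_i},D_i)$ that you rerun directly), and its lower bound embeds $P$ into the convex hulls $[G,[h]\cup G]$ of the path copies, which is precisely your construction $f_G(x)=f(x)\cup G$. The only difference is that you inline the path-poset argument rather than citing it; your preliminary observation that $t_j(P)=2^h$ is true but not needed for the exact count.
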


\begin{proof}
Since $P^{h+1}$ is a subposet of $P$,
\begin{equation} M_2(P,n)\leq M_1(P,n) \leq M_1(P^{h+1},n). \end{equation}

Now consider $M_1(P^{h+1},n)$ incomparable copies of $P^{h+1}$ in $B_n$ as defined in Theorem \ref{pathlemma}. Their convex hulls are isomorphic to $B_h$, so we can embed $P$ to them (in an induced way if $P$ is slim). It proves $M_1(P,n) \geq M_1(P^{h+1},n)$ for thin posets, and $M_2(P,n) \geq M_1(P^{h+1},n)$ for slim posets.

We already determined the value of $M_1(P^{h+1},n)$ in Lemma \ref{pathlemma}, so the proof is completed.
\end{proof}

Of course Theorem \ref{thinthm} does not contradict Theorem \ref{mainthm}, since $t_1(P)=2^h$ and
\begin{equation}
{1\over 2^h}{n\choose \left\lfloor {n\over 2} \right\rfloor }\sim {n-h\choose \left\lfloor {n-h\over 2} \right\rfloor }.
\end{equation}

The smallest non-thin poset is $V$ with three elements, $a,b, c$ and the relations $a<b,~ a<c$.
Now we give a large set of incomparable copies for all $n$. Fix the parameter $i$ $(1\leq i\leq {n+2\over 4})$. Choose an element
\begin{equation}
F\in  {[n-2i]\choose \left\lceil {n\over 2} \right\rceil -2i+1}.
\end{equation}
Then the sets
\begin{equation}\nonumber
F\cup \{ n-(2i-3), \ldots , n\} , F\cup \{ n-(2i-3), \ldots , n\} \cup \{ n-(2i-1)\} ,
F\cup \{ n-(2i-3), \ldots , n\} \cup \{ n-(2i-2)\}
\end{equation}
form an embedding of the poset $V$. Let ${\cal P}_i$ denote the set of all such copies.
It is trivial that the copies in ${\cal P}_i$ are incomparable. But not much more difficult to check
that two copies chosen from  ${\cal P}_i$ and ${\cal P}_j\ (1\leq i<j \leq {n+2\over 4}) $, respectively,
are also incomparable. Therefore
\begin{equation}
\bigcup_{i=1}^{ \left\lfloor {n+2\over 4}\right\rfloor }{\cal P}_i
\end{equation}
is a collection of incomparable embeddings of $V$. We conjecture that this is the largest one.
\begin{conjecture}
\begin{equation}
M_1(V,n)=\sum_{i=1}^{ \left\lfloor {n+2\over 4}\right\rfloor }{n-2i\choose \left\lceil {n\over 2} \right\rceil -2i+1}.
\end{equation}
\end{conjecture}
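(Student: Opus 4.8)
Since the displayed union of the $\mathcal{P}_i$ already supplies a matching lower bound, the entire difficulty lies in the upper bound
\[
M_1(V,n)\le \sum_{i=1}^{\lfloor (n+2)/4\rfloor}\binom{n-2i}{\lceil n/2\rceil-2i+1}.
\]
The plan is first to pass to a normal form. Write each incomparable copy as $A_k\subset B_k$, $A_k\subset C_k$, where $A_k$ is the image of the minimal element and $B_k,C_k$ are the images of the two maximal elements. I would replace $B_k$ by $A_k\cup\{x_k\}$ and $C_k$ by $A_k\cup\{y_k\}$ for some $x_k\neq y_k\in[n]\setminus A_k$ (possible since $B_k\neq C_k$); this only shrinks the maximal sets. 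The key observation is that every cross-copy containment is witnessed by a minimal element sitting inside a maximal element: e.g. $B_k\subseteq C_l$ forces $A_k\subseteq C_l$. Hence shrinking preserves incomparability, distinct copies keep distinct $A_k$, and the number of copies is unchanged. Thus $M_1(V,n)$ equals the largest number of pairs $(A_k;\{x_k,y_k\})$ for which $A_l\not\subseteq A_k\cup\{x_k\}$ and $A_l\not\subseteq A_k\cup\{y_k\}$ hold for all $k\neq l$.

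Read on the level sets this becomes a clean antichain problem: the $A_k$ form an antichain $\mathcal{A}$, and each $A\in\mathcal{A}$ must possess at least two \emph{private} upper neighbours, that is, two sets of size $|A|+1$ that contain $A$ but no other member of $\mathcal{A}$. Since the extremal construction lives entirely on the single level $\lceil n/2\rceil-1$ (with upper neighbours on level $\lceil n/2\rceil$), I would aim to prove the optimum is attained by a one-level family and then count on that level. The counting engine is a double count of the incidences $(A,U)$ with $A\in\mathcal{A}$, $A\subset U$, $|U|=|A|+1$, $U$ private: privacy forces each such $U$ to contain exactly one member, there are only $\binom{n}{\lceil n/2\rceil}$ candidate sets $U$ on the level, and comparing the requirement $2|\mathcal{A}|\le\#\{\text{private }U\}$ with the total incidence count $|\mathcal{A}|(\lfloor n/2\rfloor+1)=\sum_U c(U)$, where $c(U)=|\{A\in\mathcal{A}:A\subset U\}|$ and $\sum_U\binom{c(U)}{2}$ counts swap-adjacent pairs, already recovers the correct leading constant $\tfrac13\binom{n}{\lfloor n/2\rfloor}$, in agreement with $t_1(V)=3$ in Theorem \ref{mainthm}.

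The hard part is upgrading this asymptotically sharp inequality to the exact sum, and I expect two genuine obstacles. First, one must compress an arbitrary, possibly multi-level, optimal family down to the single middle level without losing copies; a shifting argument inside the Johnson-graph structure of single-element swaps is the natural tool, but preserving the "two private neighbours" condition under such moves is delicate, since shrinking one copy can create a containment with another. Second, even on one level the true maximum is strictly below the crude linear-programming bound above: the precise value $\sum_i\binom{n-2i}{\lceil n/2\rceil-2i+1}$ reflects the layered way the construction reserves private neighbours, and extracting it seems to require either an exact solution of the corresponding degree-constrained packing in the Johnson graph or an adapted cyclic-permutation count in the spirit of Katona's circle method. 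This exact evaluation is, I expect, the real crux, which is presumably why the statement appears only as a conjecture rather than a theorem.
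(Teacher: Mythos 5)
This statement is stated in the paper as a \emph{conjecture}: the paper supplies only the lower bound (the explicit families $\mathcal{P}_i$) and offers no proof of the matching upper bound. Your proposal does not close that gap either. Your normalization is sound --- shrinking each copy to $A_k,\ A_k\cup\{x_k\},\ A_k\cup\{y_k\}$ preserves incomparability because any cross-copy containment forces a containment $A_k\subseteq(\text{original maximal set of copy }l)$, and the reformulation as ``an antichain in which every member has two private upper covers'' is a correct and useful restatement. But the two steps you yourself flag as the crux --- compressing an arbitrary optimal family to a single level while preserving the private-cover condition, and then evaluating the exact one-level maximum as $\sum_i\binom{n-2i}{\lceil n/2\rceil-2i+1}$ --- are exactly the content of the conjecture, and you give no argument for either. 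A plan that defers the essential difficulties is not a proof, and indeed the authors leave this open.

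There is also a concrete error in the quantitative claim you do make. The double count you describe (each $A$ on level $\lceil n/2\rceil-1$ needs two private upper covers, private covers of distinct $A$'s are distinct) yields only $2|\mathcal{A}|\le\binom{n}{\lceil n/2\rceil}$, i.e.\ the constant $\tfrac12$, not $\tfrac13$; the minimal element $A$ and its two covers live on different levels, so they cannot be charged against a single level's capacity. The constant $\tfrac13$ is obtained in the paper not by a one-level incidence count but by the chain-counting argument of Lemma \ref{chcount} and Theorem \ref{upperthm}: each $3$-element convex hull meets at least $3\lfloor n/2\rfloor!\lceil n/2\rceil!(1-O(n^{-1}))$ maximal chains, and incomparable hulls meet disjoint sets of chains, giving $M_1(V,n)\le\tfrac13\binom{n}{\lfloor n/2\rfloor}(1+O(n^{-1}))$, which matches the conjectured sum only asymptotically (note $\sum_i\binom{n-2i}{\lceil n/2\rceil-2i+1}\sim\binom{n}{\lfloor n/2\rfloor}\sum_i 4^{-i}=\tfrac13\binom{n}{\lfloor n/2\rfloor}$). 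If you want to pursue the exact bound, you would need to strengthen the chain-counting (or a weighted version of it) to be loss-free at every level, not repair the incidence count.
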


\end{document}